\def\XXint#1#2#3{{\setbox0=\hbox{$#1{#2#3}{\int}$ }
		\vcenter{\hbox{$#2#3$ }}\kern-.6\wd0}}
\DeclareMathOperator*{\Osc}{\osc}
\newlength{\dhatheight}
\newcommand{\osc}{\operatorname{osc}}
\DeclareMathSymbol{\intprod}{\mathbin}{MnSyC}{'270}
\newcommand{\LB}{\left[}
\newcommand{\RB}{\right]}
\newcommand{\LA}{\left\langle}
\newcommand{\RA}{\right\rangle}
\newcommand{\N}{{\mathbb N}}
\newcommand{\C}{{\mathbb C}}
\newcommand{\R}{{\mathbb R}}
\newcommand{\eps}{{\varepsilon}}
\newcommand{\T}{{\mathcal T}}
\newcommand{\p}{{\partial}}
\newcommand{\That}{{ \widehat{\T} }}
\newtheorem{thm}{Theorem}[section]
\newtheorem{lemma}[thm]{Lemma}
\newtheorem*{lemma*}{Lemma}
\newtheorem{prop}[thm]{Proposition}
\newtheorem{cor}[thm]{Corollary}
\newtheorem*{conj*}{Conjecture}
   \newtheoremstyle{others}
     {3pt}
     {2pt}
     {}
     {}
     {\bf}
     {.}
     {.5em}
     {}
\theoremstyle{others}
\newtheorem*{rmk*}{Remark}
\newtheorem{defn}[thm]{Definition}
\numberwithin{equation}{section}
\begin{document}

\title{Weighted \L ojasiewicz inequalities
and regularity of harmonic map flow}
\author{Alex Waldron}
\address{University of Wisconsin, Madison}
\email{waldron@math.wisc.edu}

\begin{abstract}
At a finite-time singularity of harmonic map flow in the critical dimension, we show that a \L ojasiewicz inequality between the quantities appearing in Struwe's monotonicity formula implies continuity of the body map and the no-neck property for bubble-tree decompositions. We prove such an inequality when the target is $S^2,$ yielding both properties in this case.
\end{abstract}

\maketitle

\thispagestyle{empty}



\vspace{-5mm}

\section{Introduction}

\L ojasiewicz(-Simon) inequalities play a key role in the theory of geometric evolution equations; they rule out the notorious non-uniqueness phenomenon for subsequential limits at infinite time \cite{leonsimon}.
In recent years, most prominently in the work of Colding and Minicozzi, \L ojasiewicz inequalities suitable for finite-time singularity analysis have become available.
These have led to uniqueness theorems for the most basic multiplicity-one blowup limits of mean curvature flow, namely, round cylinders \cite{coldingminicozziuniquenesslojasiewicz} and asymptotically conical shrinkers \cite{chodoshschulzeuniquenessofasymptoticallyconical}, as well as mean-convex cylindrical shrinkers \cite{zhulojasiewiczcylindrical, zhulojasiewiczmeanconvex}.

\L ojasiewicz inequalities are also strongly tied to 
questions of regularity. Simon used his classical inequality in the elliptic setting to obtain a strong Hausdorff-dimension estimate and rectifiability of the singular set for area-minimizing currents \cite{simonrectifiability}. Colding and Minicozzi have applied their results to obtain improved regularity of singular sets for mean curvature flow with generic (i.e. cylindrical) singularities at finite time \cite{coldingminicozzisingularset}. They have also demonstrated that differentiability of the ``arrival time'' in mean-convex mean curvature flow depends on a \L ojasiewicz inequality \cite{coldingminicozzidifferentiabilityofthearrivaltime}.

This paper establishes a link between a different regularity question and a \L ojasiewicz inequality. 
The question concerns the so-called ``body map'' $u(T) = \lim_{t \nearrow T} u(t)$ of a solution of harmonic map flow $u : \Sigma \times \LB 0, T \right) \to N$ in the critical dimension, $\dim\left( \Sigma \right) = 2.$ This map is by definition of class $H^1,$ but may fail to be continuous in certain cases, as shown by Topping \cite{toppingwindingbehavior}. Meanwhile, Topping conjectured that if $N$ is real-analytic then the body map must be continuous \cite[p. 286]{toppingwindingbehavior}.

Our main result, Theorem \ref{thm:mainthm} below, is that continuity of the body map follows from the validity of a \L ojasiewicz inequality 
between the Gaussian-weighted quantities appearing in Struwe's monotonicity formula \cite{struwehmfhigherd}. We prove such an inequality, Theorem \ref{thm:firstloj}, in the case that the target manifold is the round 2-sphere, allowing us to 
settle the fundamental case of the above question.\footnote{The continuity question for $N = S^2$ was raised again recently by Jendrej, Lawrie, and Schlag \cite{jendrejlawrieschlag}.} 
Our main estimate also gives the ``no-neck'' property of the bubble-tree decomposition along any sequence approaching the singular time.

The weighted \L ojasiewicz inequality for maps to the 2-sphere is obtained by directly localizing one of the inequalities proved recently by the author \cite{lojasiewiczonS2}.
The localization is enabled by a weighted Poincar\' e inequality, Lemma \ref{lemma:Poincare}, similar to those used in Type-I singularity analysis; this inequality turns out to be particularly strong in the critical dimension. We use the inequality in two places: once in the localization, Theorem \ref{thm:firstloj}, and later in the proof of main theorem, cf. (\ref{psiintegralbound}).

An interesting feature of the proof is that we do not obtain H\"older continuity of the body map, as in the case of a ``strict'' type-II blowup \cite{stricttypeII}, unless the \L ojasiewicz inequality holds with the optimal exponent ($\alpha = 2$). In the case of a fractional exponent, we obtain a $\log$-polynomial oscillation bound which is sufficient for continuity and no necks.

\vspace{5mm}

\section{Weighted \L ojasiewicz inequality}

\subsection{Weighted Poincar{\'e} inequality} Let $(\Sigma, g)$ be a Riemannian surface and $(N,\LA \cdot, \cdot \RA)$ any compact Riemannian manifold. Given a map $u : \Sigma \to N,$ denote its stress-energy tensor by
\begin{equation}\label{stressdef}
S_{ij} = \LA \p_i u, \p_j u \RA - \frac12 g_{ij} |du|_g^2.
\end{equation}
We have
\begin{equation}\label{stressdiv}
\nabla^i S_{ij} = \LA \T (u) , \p_j u \RA,
\end{equation}
see e.g. \cite[(2.5)]{songwaldron}.
Let $X^i$ be any conformal Killing field, satisfying
$$\nabla^{i} X^j + \nabla^j X^i = \mu g^{ij}$$
for some function $\mu.$
Note that $g^{ij} S_{ij} = 0$ in dimension two. Contracting the identity (\ref{stressdiv}) with $X,$ we have
\begin{equation}\label{stressenergyidentity}
\nabla^i \left( X^j S_{ij} \right) = \LA \T (u), X^j \p_j u \RA.
\end{equation}
We now specialize to the case $\Sigma = \R^2$ with the flat metric; the general case requires only small modifications. 
Let $x = x^i \p_i$ be the radial vector field and $r = \sqrt{(x^1)^2 + (x^2)^2}$ the radial coordinate in $\R^2.$ Define the weighted $L^2$-norm
\begin{equation}\label{bartaudef}
\| \alpha \|_\tau = \sqrt{ \int_{\R^2} |\alpha |^2 \, e^{-\frac{r^2}{4\tau} } \, dV }.
\end{equation}
Let
\begin{equation*}\label{Thatpredefn}
\That_\tau (u) := \T(u) - \frac{x}{2 \tau  } \intprod du.
\end{equation*}

\begin{lemma}\label{lemma:Poincare} For any $W^{2,2}$ map $u: \R^2 \to N,$ we have
\begin{equation}\label{twistedtensiondominatesrdu}
\left\| r du \right\|_\tau \leq 4 \tau \left\|\That_\tau(u) \right\|_\tau
\end{equation}
and
\begin{equation}\label{Tdominance}
\| \T(u) \|_{\tau} \leq 3 \left\| \That_\tau(u) \right\|_{\tau}.
\end{equation}
\end{lemma}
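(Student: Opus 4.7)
My plan is to apply the stress-energy identity (\ref{stressenergyidentity}) to the radial dilation field $X = x = x^i \p_i$ on $\R^2$, which is conformal Killing with factor $\mu = 2$. The key observation is that the Gaussian weight $\phi := e^{-r^2/(4\tau)}$ is a natural partner for this vector field, since $\nabla \phi = -\tfrac{x}{2\tau}\phi$; in particular, integrating by parts against $\phi$ automatically produces the exact twist $\T(u) - \tfrac{x}{2\tau}\intprod du = \That_\tau(u)$ on the right-hand side.

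Concretely, I would multiply (\ref{stressenergyidentity}) by $\phi$, integrate over $\R^2$, and integrate by parts on the left. The Gaussian decay kills all boundary contributions under the $W^{2,2}$ hypothesis (after the standard cutoff-and-limit argument). Expanding the resulting integrand via the pointwise identity
\begin{equation*}
x^i x^j S_{ij} \,=\, |r\p_r u|^2 - \tfrac12 r^2 |du|^2,
\end{equation*}
and then transferring $\tfrac{1}{2\tau}|r\p_r u|^2$ to the other side so as to complete $\T(u)$ into $\That_\tau(u)$, I expect to arrive at
\begin{equation*}
\|r\,du\|_\tau^2 \,=\, -4\tau \int_{\R^2} \LA \That_\tau(u),\, r\p_r u \RA\, \phi\, dV.
\end{equation*}
Cauchy--Schwarz in the $\|\cdot\|_\tau$ inner product, together with the trivial bound $|r\p_r u| \le |r\,du|$, then yields (\ref{twistedtensiondominatesrdu}) after a cancellation of $\|r\,du\|_\tau$.

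Estimate (\ref{Tdominance}) would follow immediately from the triangle inequality. Writing $\T(u) = \That_\tau(u) + \tfrac{x}{2\tau}\intprod du$ and noting $\|\tfrac{x}{2\tau}\intprod du\|_\tau = \tfrac{1}{2\tau}\|r\p_r u\|_\tau \le \tfrac{1}{2\tau}\|r\,du\|_\tau$, one feeds in (\ref{twistedtensiondominatesrdu}) to produce the constant $1 + 2 = 3$. The only step I expect to require genuine care is justifying the integration by parts for maps with merely $W^{2,2}$ regularity on the noncompact plane; this ought to be handled routinely by a smooth cutoff $\chi_R(r)$, controlling the boundary and cutoff-derivative terms via the Gaussian weight, and passing $R \to \infty$, but it is the only step not of a formal algebraic nature.
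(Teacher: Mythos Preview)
Your proposal is correct and follows essentially the same argument as the paper: integrate the stress-energy identity against the Gaussian weight, unpack $x^i x^j S_{ij}$ to reveal $\That_\tau(u)$ paired with $x \intprod du = r\p_r u$, apply Cauchy--Schwarz with $|r\p_r u| \le r|du|$, and finish (\ref{Tdominance}) by the triangle inequality. The only addition you make is the explicit remark about justifying the integration by parts via cutoffs, which the paper leaves implicit.
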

\begin{proof}
Integrating (\ref{stressenergyidentity}) by parts against $e^{-\frac{r^2}{4 \tau} },$ 
we obtain
\begin{equation*}
\frac{1}{2\tau} \int x^i x^j S_{ij} e^{-\frac{r^2}{4 \tau} } \, dV = \int \LA \T(u), x^j \p_j u \RA e^{-\frac{r^2}{4 \tau} } \, dV.
\end{equation*}
Unpacking the definition of the stress-energy tensor (\ref{stressdef}), we have
\begin{equation*}
\begin{split}
-  \frac{1}{4 \tau} \int r^2 |du|^2 e^{-\frac{r^2}{4 \tau} } \, dV & = \int \LA \T(u), x^j \p_j u \RA e^{-\frac{r^2}{4 \tau} } \, dV - \int \LA \frac{ x^i  }{ 2\tau } \p_i u, x^j \p_j u \RA e^{-\frac{r^2}{4 \tau} } \, dV \\
& = \int \LA \That_\tau(u), x^j \p_j u \RA e^{-\frac{r^2}{4 \tau} } \, dV.
\end{split}
\end{equation*}
Applying Cauchy-Schwarz, we obtain
$$\frac{1}{4 \tau} \left\| r du \right\|_\tau^2 \leq \left\| \That_\tau(u) \right\|_{\tau} \| x \intprod du \|_\tau.$$
Since $|x \intprod du| \leq r |du|,$ this implies (\ref{twistedtensiondominatesrdu}).
Then (\ref{Tdominance}) follows from (\ref{twistedtensiondominatesrdu}) and the triangle inequality.
\end{proof}

\vspace{5mm}

\subsection{Weighted \L ojasiewicz inequality for maps to $S^2$}

Let
$$\Phi(u) := \frac12 \int |du|^2 e^{ \frac{-r^2}{4} } \, dV$$
and
$$\That(u) : = \That_1(u) = \T(u) - \frac{x}{2 } \intprod du.$$

\begin{thm}\label{thm:firstloj}
Fix $k \in \N$ and $\beta > 0.$ Given any map $u : \R^2 \to S^2$ with $E(u) \leq 4 \pi k,$ there exists $n \in \{0, \ldots, k\}$ such that
$$|\Phi(u) - 4\pi n | \leq C_{k, \beta} \| \That(u) \|_1^{2 - \beta},$$
where $\| \cdot \|_{1}$ is the weighted $L^2$-norm defined by (\ref{bartaudef}).
\end{thm}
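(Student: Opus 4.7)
The plan is to deduce this weighted \L ojasiewicz inequality from a corresponding unweighted version for maps $\R^2 \to S^2$ established by the author in \cite{lojasiewiczonS2}, via a localization argument whose essential ingredient is Lemma \ref{lemma:Poincare}. Squaring the Poincar\'e bound (\ref{twistedtensiondominatesrdu}) and restricting the integral to $\{r \geq R\}$ yields the tail estimate
$$\int_{\{r \geq R\}} |du|^2 \, e^{-r^2/4}\, dV \;\leq\; \frac{16}{R^2}\|\That(u)\|_1^2$$
for every $R > 0$, showing that $\Phi(u)$ is concentrated in a ball $B_R$ up to an error of order $\|\That(u)\|_1^2/R^2$.

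Given this, I would choose a scale $R$ depending on $\|\That(u)\|_1$ and $\beta$, and use a dyadic pigeonhole to locate a transition annulus $A \subset B_{2R} \setminus B_R$ on which $u$ has small energy. The $\eps$-regularity theorem then implies that $u$ is continuous on $A$ and maps into a small geodesic ball of $S^2$, permitting a smooth interpolation (by harmonic replacement or projected convex combination) to a constant outside $B_{2R}$. The resulting map $\tilde u: \R^2 \to S^2$ agrees with $u$ on $B_R$, extends canonically to a smooth map $S^2 \to S^2$ of some degree $n \in \{0, \ldots, k\}$, and, by (\ref{Tdominance}) together with explicit computation on $A$, satisfies $\|\T(\tilde u)\|_{L^2(S^2)} \leq C \|\That(u)\|_1$.

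Applying the global \L ojasiewicz inequality of \cite{lojasiewiczonS2} to $\tilde u$ then gives
$$|E(\tilde u) - 4\pi n| \;\leq\; C\|\T(\tilde u)\|_{L^2}^{2-\beta} \;\leq\; C'\|\That(u)\|_1^{2-\beta},$$
and the theorem follows once this is combined with a comparison $|E(\tilde u) - \Phi(u)| \leq C\|\That(u)\|_1^{2-\beta}$ and the triangle inequality.

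The main obstacle will be this final energy comparison. Inside $B_R$ the unweighted and weighted integrands differ by a factor $1 - e^{-r^2/4}$ which is not uniformly small, while outside $B_R$ one must also account for the extra energy of the cutoff. Balancing these contributions so as to preserve the exponent $2 - \beta$ requires a careful choice of $R$ as a power of $\|\That(u)\|_1^{-1}$, together with a refined use of the Poincar\'e bound $\|r\,du\|_1 \leq 4\|\That(u)\|_1$ against the pointwise control provided by $\eps$-regularity. The loss from the optimal exponent $2$ to $2 - \beta$ is inherent in this optimization, with the implicit constant $C_{k,\beta}$ blowing up as $\beta \to 0$.
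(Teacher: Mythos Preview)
Your overall strategy---localize using Lemma \ref{lemma:Poincare} and feed into an unweighted \L ojasiewicz inequality from \cite{lojasiewiczonS2}---matches the paper's. However, your execution contains a genuine misconception about the scale $R$. You propose to take $R$ as a (positive) power of $\|\That(u)\|_1^{-1}$, hence large, and identify the resulting energy comparison as the ``main obstacle.'' This cannot work: the Gaussian weight $e^{-r^2/4}$ is concentrated at \emph{unit} scale, so for large $R$ the unweighted energy $E(\tilde u)$ on $B_R$ and the weighted quantity $\Phi(u)$ are simply not comparable (the Poincar\'e bound $\|r\,du\|_1 \leq 4\|\That(u)\|_1$ controls $\int r^2|du|^2 e^{-r^2/4}$, not $\int_{B_R} r^2|du|^2$, and the missing factor $e^{R^2/4}$ blows up). The paper instead works at the \emph{fixed} scale $R=1$: since $e^{-r^2/4}$ is bounded above and below on $B_2$, one gets immediately $E(u, B_2\setminus B_1)\leq C\|\That(u)\|_1^2$ and $|E(u,B_1)-\Phi(u)|\leq C\|\That(u)\|_1^2$ from the Poincar\'e inequality, with no optimization needed.

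The second difference is the form of the unweighted input. Rather than cutting off and applying a global inequality on $S^2$, the paper invokes a \emph{local} result, \cite[Corollary 1.3]{lojasiewiczonS2}, which bounds $|E(u,B_1)-4\pi n|$ directly in terms of $\|\T(u)\|_{L^2(B_2)}$, the annular energy $E(u,B_2\setminus B_1)$, and a concentration scale $\lambda$ (the smallest radius with $E(u,B_2\setminus B_\lambda)\leq\eps_0$). The exponent loss $2\to 2-\beta$ lives entirely in that inequality, via the term $\lambda^{1-\beta}$; it is intrinsic to the $S^2$ bubbling structure, not an artifact of your proposed balancing. The Poincar\'e inequality gives $\lambda \leq C\|\That(u)\|_1$, and substituting all three quantities into Corollary 1.3 yields the theorem in a few lines. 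Your cutoff-then-global route could plausibly be made to work at unit scale, but it reintroduces by hand the boundary analysis that the local corollary already packages.
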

\begin{proof} 

By the weighted Poincar\'e inequality of Lemma \ref{lemma:Poincare}, we have
$$ \int r^2 |du|^2 e^{-\frac{r^2}{4}}  + \| \T(u) \|^2_{L^2(B_2)} \leq C \| \That(u) \|_1^2.$$
In particular, this gives $ E(u, B_2 \setminus B_1) \leq C\| \That(u) \|_1^2.$ We can assume without loss of generality that $ \| \That(u) \|^2_1 \leq \eps_0 / C.$ 

Let $0 \leq \lambda \leq 1$ be the smallest radius such that
$$\int_{B_2 \setminus B_\lambda} |du|^2 \leq \eps_0.$$
We have
$$\lambda^2 \int_{B_2 \setminus B_\lambda} |du|^2 \leq \int r^2 |du|^2 e^{-\frac{r^2}{4}} \, dV \leq  C \| \That(u) \|_1^2.$$
This gives
$$\lambda \leq  C \| \That(u) \|_1.$$
Now, \cite[Corollary 1.3]{lojasiewiczonS2} reads
\begin{equation}\label{quantitativeloj:cestimate}
\begin{split}
&  \left| E(u, B_1) - 4 \pi n \right| \\
 & \qquad  \leq C_{k,\beta} \left( \| \T(u) \|_{L^2(B_2)}^2 + E(u, B_{2} \setminus B_1) + \lambda^{1 - \beta} \left( \| \T(u) \|_{L^2(B_2)} + \sqrt{E(u, B_{2} \setminus B_1)} \right) \right).
 \end{split}
\end{equation}
Inserting the above estimates into the RHS of (\ref{quantitativeloj:cestimate}), we get
$$\left| E(u, B_1) - 4 \pi n \right| \leq C \| \That(u) \|_1^{2 - \beta}.$$
We further have
$$\left| E(u, B_1) - \frac12 \int_{B_1} |du|^2 e^{\frac{-r^2}{4}} \right| \leq \int_{B_1} r^2 |du|^2 \leq C \| \That(u) \|_1^{2}.$$
Finally, we have
$$\int_{\R^2 \setminus B_1} |du|^2 e^{\frac{-r^2}{4}} \leq \int_{\R^2 \setminus B_1} r^2 |du|^2 e^{\frac{-r^2}{4}} \leq C \| \That(u) \|_1^{2}.$$
Combining these, we obtain the desired estimate.
\end{proof}

\vspace{5mm}

\section{Monotonicity formulae in dimension two}

Recall the \emph{pointwise energy identity} for harmonic map flow,
\begin{equation}\label{pointwiseenergy}
\frac12 \frac{\p }{\p t} |du|^2 + |\T(u)|^2 = \nabla^i \nabla^j S_{ij},
\end{equation}
see \cite[(2.7)]{songwaldron}.

\begin{lemma} 
Let
$$w = w(r,t) = \exp \left( -{\frac{\phi(x)}{2\tau(t)}} \right).$$
For a solution $u : \Sigma \to N$ of harmonic map flow, we have
\begin{equation*}
\begin{split}
\frac12 \frac{\p}{\p t} \left( |du|^2 w \right) + \left| \T(u) - \frac{\nabla \phi}{2 \tau} \intprod du \right|^2 w & = \nabla^i \nabla^j \left( S_{ij} w \right) \\
& \qquad + \frac{w}{2\tau} \nabla^i \nabla^j \phi S_{ij} + \frac{\left( |\nabla \phi|^2 + 2 \phi \tau'(t) \right) w}{8\tau^2}  |du|^2.
\end{split}
\end{equation*}
\end{lemma}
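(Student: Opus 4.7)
My plan is to prove the identity by a direct computation starting from the pointwise energy identity (\ref{pointwiseenergy}) and the stress-energy divergence identity (\ref{stressdiv}), multiplying through by $w$ and absorbing the weight via Leibniz's rule. I will treat $\phi(x)$ as a general function and use only the specific form of $w$.

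\medskip
\noindent\textbf{Step 1: Rewrite the time derivative.} Since $\frac{\p w}{\p t} = \frac{\phi\,\tau'(t)}{2\tau^2}\,w$, we have
$$w\,\frac12 \frac{\p}{\p t}|du|^2 = \frac12 \frac{\p}{\p t}\!\left(|du|^2 w\right) - \frac{\phi\,\tau'(t)\,w}{4\tau^2}|du|^2.$$
Thus multiplying (\ref{pointwiseenergy}) by $w$ gives
$$\frac12 \frac{\p}{\p t}\!\left(|du|^2 w\right) + w\,|\T(u)|^2 \;=\; w\,\nabla^i\nabla^j S_{ij} + \frac{\phi\,\tau'(t)\,w}{4\tau^2}|du|^2.$$

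\noindent\textbf{Step 2: Move $w$ inside the second-order derivative.} Using the symmetry $S_{ij}=S_{ji}$,
$$w\,\nabla^i\nabla^j S_{ij} = \nabla^i\nabla^j\!\left(S_{ij} w\right) \;-\; 2\,\nabla^j w \,\nabla^i S_{ij} \;-\; S_{ij}\,\nabla^i\nabla^j w.$$
Since $\nabla^i w = -\frac{\nabla^i\phi}{2\tau}\,w$ and $\nabla^i\nabla^j w = \left(\frac{\nabla^i\phi\,\nabla^j\phi}{4\tau^2} - \frac{\nabla^i\nabla^j\phi}{2\tau}\right)w$, and using (\ref{stressdiv}) in the form $\nabla^i S_{ij}=\LA \T(u), \p_j u\RA$, the two correction terms become
$$-2\,\nabla^j w\,\nabla^i S_{ij} = \frac{w}{\tau}\LA \T(u), \nabla\phi\intprod du\RA,$$
$$-S_{ij}\,\nabla^i\nabla^j w = -\frac{w}{4\tau^2}\,S_{ij}\,\nabla^i\phi\,\nabla^j\phi + \frac{w}{2\tau}\,S_{ij}\,\nabla^i\nabla^j\phi.$$

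\noindent\textbf{Step 3: Complete the square.} Unpacking the definition (\ref{stressdef}),
$$S_{ij}\,\nabla^i\phi\,\nabla^j\phi = |\nabla\phi\intprod du|^2 - \tfrac12|\nabla\phi|^2|du|^2,$$
and the square-completion identity
$$w\,|\T(u)|^2 - \frac{w}{\tau}\LA\T(u),\nabla\phi\intprod du\RA + \frac{w}{4\tau^2}|\nabla\phi\intprod du|^2 = w\left|\T(u)-\frac{\nabla\phi}{2\tau}\intprod du\right|^2$$
allows me to collect the $|\T(u)|^2$, $\LA \T(u), \nabla\phi\intprod du\RA$, and $|\nabla\phi\intprod du|^2$ contributions into the single squared term on the left. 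What remains on the right from these manipulations is $\frac{w}{8\tau^2}|\nabla\phi|^2|du|^2$, which combines with the $\frac{\phi\tau'(t) w}{4\tau^2}|du|^2$ term from Step 1 to give exactly $\frac{(|\nabla\phi|^2 + 2\phi\,\tau'(t))w}{8\tau^2}|du|^2$.

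\medskip
\noindent\textbf{Main obstacle.} There is no conceptual difficulty here — the whole proof is essentially bookkeeping. The only point requiring attention is the careful tracking of signs and symmetries when expanding $\nabla^i\nabla^j(S_{ij}w)$: the factor of $2$ in $-2\nabla^j w\,\nabla^i S_{ij}$ comes from $S_{ij}=S_{ji}$, and without this symmetry one would obtain a term that does not combine cleanly into the square. Once this is handled, the identity drops out after grouping terms.
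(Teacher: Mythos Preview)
Your proof is correct and follows essentially the same route as the paper: multiply the pointwise energy identity (\ref{pointwiseenergy}) by $w$, apply the Leibniz rule to pass $w$ inside $\nabla^i\nabla^j S_{ij}$ using the symmetry of $S_{ij}$ and the divergence identity (\ref{stressdiv}), compute the derivatives of $w$, and complete the square. Your presentation is slightly more explicit in Step~3 about the square-completion and the leftover $\frac{w}{8\tau^2}|\nabla\phi|^2|du|^2$ term, but the argument is identical to the paper's.
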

\begin{proof}
Multiplying the pointwise energy identity (\ref{pointwiseenergy}) by $w,$ 
we obtain
\begin{equation*}
\begin{split}
\frac12 \frac{\p}{\p t} \left( |du|^2 w \right) - \frac{\tau'(t) }{4\tau^2} \phi |du|^2 w + |\T(u)|^2 w & = \nabla^i \nabla^j \left( S_{ij} w \right) - 2 \nabla^i w \nabla^j S_{ij} - \nabla^i \nabla^j w S_{ij} \\
& = \nabla^i \nabla^j \left( S_{ij} w \right) + 2\LA \T(u) , \frac{\nabla \phi \intprod du}{2\tau}  \RA w \\
&  \quad - \frac{ \left(2\tau \nabla^i \nabla^j \phi + \nabla^i \phi \nabla^j \phi \right) }{4\tau^2} S_{ij} w.
\end{split}
\end{equation*}
Recalling the definition of the stress-energy tensor (\ref{stressdef}), we can rearrange to obtain the desired formula.
\end{proof}

We have the following versions of Struwe's monotonicity formula in the critical dimension.

\begin{thm}\label{thm:monotonicities} Let $\Sigma = \R^2$ and take
$$\Phi_{\tau}(u) = \frac12 \int |du|^2 e^{-\frac{r^2}{4 \tau}} \, dV, \quad \Psi_{\tau}(u) = \frac12 \int r^2 |du|^2 e^{-\frac{r^2}{4 \tau}} \, dV.$$
We have
\begin{equation}\label{Phimonotonicity}
\frac{d}{dt} \Phi_{\tau}(u(t)) + \| \That_{\tau}(u(t)) \|_{\tau}^2 = \frac{1 + \tau'(t)}{4 \tau^2} \Psi_\tau(u(t))
\end{equation}
and
\begin{equation}\label{Psimonotonicity}
\frac{d}{dt} \Psi_{\tau}(u(t)) + \| r \That_{\tau}(u(t)) \|_{\tau}^2 = \frac{1 + \tau'(t)}{4 \tau^2} \int r^4 |du|^2 e^{-\frac{r^2}{4 \tau}} \, dV.
\end{equation}
\end{thm}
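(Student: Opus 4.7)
My plan is to derive both identities from the pointwise formula of the previous lemma by specializing to $\Sigma = \R^2$ with the distance-squared weight $\phi(x) = r^2/2$, so that $w = e^{-r^2/(4\tau)}$. First I would compute $\nabla \phi = x$, $|\nabla \phi|^2 = r^2 = 2\phi$, and $\nabla^i \nabla^j \phi = \delta^{ij}$. Two simplifications then occur: the middle term $\frac{w}{2\tau} \nabla^i \nabla^j \phi \cdot S_{ij}$ vanishes because $g^{ij}S_{ij} = \tr S = 0$ in dimension two (as noted in the text before (\ref{stressenergyidentity})), and the modified tension $\T(u) - \frac{\nabla \phi}{2\tau} \intprod du$ coincides with $\That_\tau(u)$. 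The pointwise identity thus collapses to
\begin{equation*}
\frac{1}{2} \p_t \bigl( |du|^2 w \bigr) + |\That_\tau(u)|^2 \, w = \nabla^i \nabla^j \bigl( S_{ij} w \bigr) + \frac{r^2 \bigl(1 + \tau'(t)\bigr) w}{8\tau^2} \, |du|^2.
\end{equation*}

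For (\ref{Phimonotonicity}) I would simply integrate this identity over $\R^2$. The divergence term drops after integration by parts thanks to the Gaussian decay of $w$, while the interchange of $\frac{d}{dt}$ with the integral is valid because $r$ is independent of $t$. The remaining terms combine to give the identity, where the $r^2$-weighted integral on the right is recognized as $2 \Psi_\tau$.

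For (\ref{Psimonotonicity}) I would multiply the pointwise identity by $r^2$ before integrating. The time-derivative term yields $\frac{d}{dt} \Psi_\tau$, the tension term yields $-\|r \That_\tau(u)\|_\tau^2$, and the last term produces the weighted $r^4$ integral appearing on the right-hand side. The interesting point is the divergence term $\int r^2 \nabla^i \nabla^j(S_{ij} w) \, dV$: integrating by parts twice transfers both derivatives onto $r^2$, and since $\nabla^i \nabla^j(r^2) = 2\delta^{ij}$, the result is proportional to $\int \tr S \cdot w \, dV$, which once again vanishes in dimension two.

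The main technical obstacle is justifying the integrations by parts at spatial infinity (and, for the $\Psi$ identity, the finiteness of $\int r^2 \nabla^i \nabla^j(S_{ij} w)$ before the boundary terms are discarded). I would handle this by the standard cutoff device: multiply by a radial bump function $\eta_R$ supported in $B_{2R}$, carry out the manipulations on $\supp \eta_R$, and send $R \to \infty$, using that the product of $r^k e^{-r^2/(4\tau)}$ with the a priori $L^2$ bounds on $du$ and $\T(u)$ supplied by the flow (together with the $W^{2,2}$ regularity assumed in Lemma \ref{lemma:Poincare}) makes all boundary contributions vanish in the limit.
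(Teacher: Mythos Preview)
Your proposal is correct and follows essentially the same route as the paper: specialize the preceding lemma with $\phi = r^2/2$, integrate the resulting pointwise identity to obtain (\ref{Phimonotonicity}), and for (\ref{Psimonotonicity}) multiply by $r^2$ before integrating, killing the divergence term via $\nabla^i\nabla^j r^2 = 2\delta^{ij}$ and $\tr S = 0$. Your coefficient $r^2(1+\tau')/(8\tau^2)$ in the pointwise identity is in fact the one that follows from the lemma (the paper's displayed pointwise formula has a harmless factor-of-two slip), and it yields (\ref{Phimonotonicity}) exactly after absorbing the $\tfrac12$ in the definition of $\Psi_\tau$; your added remarks on justifying the integrations by parts via a cutoff are more than the paper spells out but are the standard way to make the argument rigorous.
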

\begin{proof} Take $\phi = \frac{r^2}{2}$ in the previous Lemma, so that $w = e^{\frac{-r^2}{4\tau}}.$
We have
\begin{equation}\label{pointwisemonotR2}
\begin{split}
\frac12 \frac{\p}{\p t} \left( |du|^2 w \right) + \left| \That_{\tau}(u) \right|^2 w & = \nabla^i \nabla^j \left( S_{ij} w \right) + \frac{\left( 1 + \tau'(t) \right)r^2}{4 \tau^2} |du|^2 w.
\end{split}
\end{equation}
The identity (\ref{Phimonotonicity}) follows by integrating (\ref{pointwisemonotR2}) in space. The identity (\ref{Psimonotonicity}) follows by integrating by parts against $r^2$ and using $\nabla^i\nabla^j r^2 = 2 \delta^{ij}$ together with tracelessness of $S_{ij}.$
\end{proof}

\vspace{5mm}

\section{Estimates on energy density and oscillation}

\begin{lemma}\label{lemma:rduestimate}
Fix $0 < 2 \lambda \leq R \leq \sqrt{T}$ and $\kappa > 0.$ 
Let $u : \R^2 \times \LB 0, T \right) \to N$ be a classical solution of harmonic map flow.
Suppose that $R^2 \leq T_1 \leq T$ is such that 
\begin{equation*}
\sup_{T_1 - R^2 \leq t \leq T_1 - \lambda^2} \frac{\Psi_{T_1 - t} \left( u(t) \right)}{T_1 - t} \leq \eps_0,
\end{equation*}
where $\eps_0 > 0$ depends on the geometry of $N.$ Letting
\begin{equation*}
\eta : = \sqrt{ \frac{\Psi_{4R^2} \left( u(T_1 - R^2) \right)}{R^2} } 
\end{equation*}
and
 \begin{equation}\label{psidefn}
\psi \left( \log \frac{R}{\sqrt{T_1 - t} } \right) : = \sqrt{\frac{\Psi_{T_1 - t} \left( u(t) \right)}{T_1 - t} },
\end{equation}
we have
\begin{equation*}
r |du(x,t)| \leq C_{\kappa} \left( \int_0^{\log \frac{R}{r}} \psi\left( \sigma \right) e^{\kappa \left( \sigma - \log \frac{R}{r} \right)} \, d\sigma  + \eta \left( \frac{r}{R} \right)^\kappa \right)
\end{equation*}
for 
$\lambda \leq r \leq \min \left\{ \left( C_{N, \kappa} \sqrt[\kappa]{\eta} \right)^{-1} , 1 \right\} R$ and
$T_1 - \frac14 r^2 \leq t < T_1.$ 
\end{lemma}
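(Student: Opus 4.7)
Proof plan.

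My strategy is to combine Struwe's parabolic $\eps$-regularity with a dyadic scaling iteration driven by the weighted monotonicity formula (\ref{Psimonotonicity}), exploiting the excess dissipation $\|r\That\|^2$ as the source of the exponential decay factor.

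I would begin with a local $\eps$-regularity bound at the target scale. Fix $(x,t)$ with $|x| = r$ and $t \in [T_1 - r^2/4, T_1)$, and observe that the ball $B_{r/2}(x)$ lies in the annulus $\{r/2 \leq |y| \leq 3r/2\}$. On this annulus, the Gaussian weight $e^{-|y|^2/(16r^2)}$ is bounded below by a universal constant and $|y|^2$ is comparable to $r^2$, so
$$\int_{B_{r/2}(x)} |du(s)|^2 \, dV \leq C\, r^{-2}\, \Psi_{4r^2}(u(s))$$
for any time $s$. Under the smallness hypothesis on $\psi^2 = \Psi_{T_1-t}(u(t))/(T_1 - t)$, Struwe's parabolic $\eps$-regularity in the critical dimension then yields
$$r\,|du(x,t)| \leq C\,\sup_{s \in [t - r^2/16,\,t]}\sqrt{\,r^{-2}\,\Psi_{4r^2}(u(s))\,}.$$

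Next, I would propagate the $\Psi_{4r^2}$-bound backward through time via a shifted form of monotonicity. For any constant $c \geq 0$, replacing $\tau(t) = T_1 - t$ by $\tilde\tau(t) = T_1 - t + c$ in (\ref{Psimonotonicity}) still gives $\tilde\tau'(t) = -1$ and hence $\frac{d}{dt}\Psi_{\tilde\tau(t)}(u(t)) = -\|r\That_{\tilde\tau}(u(t))\|^2_{\tilde\tau} \leq 0$. The \emph{excess dissipation} picked up between two times then provides a quantitative improvement rather than just monotonicity. Choosing $c$ so that $\tilde\tau(t)$ matches $T_1 - t'$ at the appropriate dyadic times $t' = T_1 - r_k^2$ with $r_k = 2^{-k} R$, I can bound $\Psi_{4r^2}(u(s))$ in terms of $\Psi_{r_k^2}(u(T_1 - r_k^2)) = r_k^2\,\psi(\sigma_k)^2$, together with an integrated dissipation contribution from each intermediate scale. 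Organizing the resulting telescoping sum with $\sigma_k = k\log 2 = \log(R/r_k)$ yields a dyadic bound
$$r\,|du(x,t)| \leq C_\kappa \sum_{k=0}^{K} 2^{-\kappa(K-k)}\,\psi(\sigma_k) + C_\kappa\,\eta\,2^{-\kappa K},$$
with $r_K \sim R$ controlled via $\eta$ in place of $\psi$. The passage to the continuous integral $\int_0^{L}\psi(\sigma)\,e^{-\kappa(L-\sigma)}\,d\sigma + \eta(r/R)^\kappa$, where $L = \log(R/r)$, is then a routine Riemann-sum comparison.

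The principal obstacle is establishing the dyadic recursion with a genuine improvement factor $2^{-\kappa}$ at each scale, for any prescribed $\kappa > 0$. This is a quantitative improvement-of-flatness estimate for harmonic map flow: at each scale $r_k$, the excess dissipation $\int \|r\That_{\tilde\tau}\|^2\,dt$ combines with the weighted Poincar\'e inequality of Lemma \ref{lemma:Poincare}, applied on the annular region between consecutive dyadic scales, to trade decreasing scale for a small decay in the effective energy density. The fact that $\kappa$ is arbitrary with a constant $C_\kappa$ that blows up reflects interpolation between a basic linear decay and the sharp quadratic rate. The upper bound $r \leq (C_{N,\kappa}\,\eta^{1/\kappa})^{-1}\,R$ is the precise restriction needed to keep the iterated quantities below the $\eps$-regularity threshold at every intermediate scale, so that the recurrence does not break down before reaching the target scale $r$.
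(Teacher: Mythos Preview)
Your overall architecture (shifted monotonicity to propagate $\Psi$ forward in time, followed by $\eps$-regularity on a dyadic annulus) matches the paper's endgame. The genuine gap is in the decay mechanism: you attribute the factor $e^{-\kappa(L-\sigma)}$ to the dissipation term $\|r\That_{\tilde\tau}\|^2$ in (\ref{Psimonotonicity}) together with Lemma \ref{lemma:Poincare}. But with $\tilde\tau'(t) = -1$ that identity gives only $\frac{d}{dt}\Psi_{\tilde\tau} = -\|r\That_{\tilde\tau}\|^2 \leq 0$, i.e.\ bare monotonicity, and Lemma \ref{lemma:Poincare} bounds $\|r\,du\|_\tau$ by $\|\That_\tau\|_\tau$, not by $\|r\That_\tau\|_\tau$. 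There is no evident way to convert the time-integrated quantity $\int \|r\That_{\tilde\tau}\|^2\,dt$ into a pointwise-in-time gain of $2^{-\kappa}$ on $\Psi$ at each dyadic step; your ``improvement-of-flatness'' recursion is asserted rather than derived, and the ingredients you name do not close it.

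The paper's mechanism is different and quite specific. One takes $\tau(t) = 4(T_1 - t)$, so $\tau' = -4$ and the right-hand side of (\ref{Psimonotonicity}) becomes the genuinely negative drift $-\tfrac{3}{4\tau^2}\int r^4 |du|^2 e^{-r^2/4\tau}$; the dissipation $\|r\That\|^2$ is simply discarded. Writing $\bar\psi^2(t) = \Psi_{4(T_1-t)}(u(t))/(T_1-t)$, the elementary weight comparison $3xe^{-x} > a e^{-x} - e^{a-4x}$ with $a = 4+4\kappa$, $x = r^2/4\tau$, converts this $r^4$-term into
\[
(T_1 - t)\,\frac{d\bar\psi^2}{dt} \;\leq\; e^{4+4\kappa}\,\psi^2\!\left(-\tfrac12\log\tfrac{T_1-t}{R^2}\right) \;-\; \kappa\,\bar\psi^2,
\]
a linear ODE in $s = -\tfrac12\log\frac{T_1-t}{R^2}$ with decay rate $\kappa$; Gronwall then gives exactly the integral $\int_0^s \psi(\sigma)e^{\kappa(\sigma - s)}\,d\sigma + \eta e^{-\kappa s}$. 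Only afterwards does one use the shift $\tilde\tau(t) = 3\rho^2 + T_1 - t$ (with $\tilde\tau' = -1$) to freeze $\Psi_{4\rho^2}$ forward to times $t \geq T_1 - \rho^2$, and then apply $\eps$-regularity. So the decay comes from choosing $\tau' < -1$ and a Gaussian weight comparison between scales $\tau$ and $4\tau$, not from the $\That$-dissipation or the Poincar\'e inequality (the latter is used in Theorem \ref{thm:mainthm}, not here).
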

\begin{proof}
We may assume without loss of generality that $R = T_1 = 1.$
Let
$$\bar{\psi}(t) = \sqrt{ \frac{\Psi_{4(1 - t)}(u(t))}{1 - t} }.$$
We claim that
\begin{equation}\label{psiclaim}
\bar{\psi}(t) \leq \eta (1 - t)^{\frac{\kappa}{2} } + \frac{e^{4 + 4\kappa} }{\kappa} \int_0^{- \frac12 \log \left( 1 - t \right) } \psi \left( s \right)e^{\kappa \left(s + \frac12 \log \left( 1 - t \right) \right)} \, ds
\end{equation}
for all $0 \leq t < 1.$

Applying the previous theorem with $\tau(t) = 4(1 - t),$ we obtain
\begin{equation}\label{psi'(t)}
\begin{split}
\frac{d \bar{\psi}^2(t) }{dt} 
& \leq \frac{1}{1 - t} \left( \bar{\psi}^2(t) - \frac{3}{4 \tau^2} \int r^4 |du|^2 e^{-\frac{r^2}{4\tau}} \, dV \right)
\end{split}
\end{equation}
Note that since $e^{a-3x} > a - 3x,$ we have 
$$3x e^{-x} > a e^{-x} - e^{a - 4x}.$$
Taking $a = 4 + 4\kappa$ and $x = \frac{r^2}{4 \tau},$ and multiplying by $-r^2,$ we obtain
$$-\frac{3r^4}{4 \tau} e^{-\frac{r^2}{4 \tau} } < r^2 \left( e^{4 + 4 \kappa -\frac{r^2}{\tau} } - (4 + 4 \kappa) e^{-\frac{r^2}{4 \tau} } \right).$$
Using this in the integrand of the second term on the RHS of (\ref{psi'(t)}), we get
\begin{equation*}
\begin{split}
(1 -t) \frac{d\bar{\psi}^2(t)}{dt} & \leq \bar{\psi}^2(t) + e^{4 + 4 \kappa} \frac{\Psi_{1 - t}(u(t))}{\tau} - (1 + \kappa)\bar{\psi}^2(t) \\
& \leq e^{4 + 4\kappa} \psi^2 \left( - \frac12 \log \left( 1 - t \right) \right) - \kappa \bar{\psi}^2(t).
\end{split}
\end{equation*}
Let $s = - \frac12 \log \left( 1 - t \right),$ so that $\frac{d}{ds} = 2 (1 - t) \frac{d}{dt}$ and the above reduces to
$$\frac{d \bar{\psi}^2(s)}{ds} \leq 2\left( e^{4 + 4\kappa} \psi^2(s) - \kappa \bar{\psi}^2(s) \right).$$
Since $\psi(s) \leq \bar{\psi}(s),$ we may use the product rule and divide out by $\bar{\psi}(s),$ to obtain
$$\frac{d \bar{\psi}(s)}{ds} \leq e^{4 + 4\kappa} \psi(s) - \kappa \bar{\psi}(s) .$$
By Gronwall's inequality, we have
\begin{equation*}
\begin{split}
\bar{\psi}(s) & \leq \left( \bar{\psi}(0)  + e^{4 + 4\kappa} \int_0^s \psi(\sigma) e^{ \kappa \sigma} \, d\sigma \right) e^{-\kappa s} \\
& \leq \eta (1 - t)^{\kappa/2} + e^{4 + 4\kappa} \int_0^s \psi(s) e^{\kappa (\sigma - s) } \, d \sigma. 
\end{split}
\end{equation*}
This establishes the claim (\ref{psiclaim}).

Now, let $\rho \in \LB \lambda, R \RB$ and take $\tilde{\tau}(t) = 3\rho^2 + 1 - t,$ so that $\tilde{\tau}(1 - \rho^2) = 4 \rho^2$ and $\tilde{\tau}'(t) = -1.$ 
Integrating (\ref{Psimonotonicity}) from $1 - \rho^2$ to $t \in \LB 1 - \rho^2, 1 \right),$ we obtain
$$\Psi_{\tilde{\tau}(t)}(u(t)) \leq \Psi_{\tilde{\tau}(1 - \rho^2)} \left( u \left( 1 - \rho^2 \right) \right) = \rho^2 \bar{\psi}^2(1 - \rho^2). $$ 
Since $3 \rho^2 \leq \tilde{\tau}(t) \leq 4 \rho^2,$ we get
\begin{equation}\label{dusquaredpreestimate}
\begin{split}
\int_{\frac12\rho }^{2 \rho } |du(t)|^2\, dV & \leq C \int_{\frac12 \rho }^{2 \rho } \frac{r^2}{\tilde{\tau}(t)} |du(t)|^2 e^{- \frac{r^2}{ 4 \tilde{\tau}(t) } } \, dV  \leq C \frac{\Psi_{\tilde{\tau}(t)}(u(t))}{\rho^2} \leq \bar{\psi}^2(1 - \rho^2).
\end{split}
\end{equation}
Assuming that $\rho^{2\kappa} \leq \frac{\eps_0}{\eta^2},$ (\ref{psiclaim}) implies that the RHS of (\ref{dusquaredpreestimate}) is less than $C \eps_0.$ We may apply standard epsilon-regularity, see e.g. \cite[Theorem 3.4]{songwaldron}, and (\ref{psiclaim}), to obtain the desired estimate.
\end{proof}

\begin{thm}\label{thm:mainthm}
Let $u : \R^2 \times \LB 0, T \right) \to N$ be a classical solution of harmonic map flow. Suppose that $0 < T_1 \leq T$ and $0 < R \leq \sqrt{T_1}$ are such that 
\begin{equation*}
\Psi_{4R^2} \left( u(T_1 - R^2) \right) \leq \eta R^2.
\end{equation*}
Suppose that there exists $E_0 \geq 0,$ $0 < \eps \leq \eps_0,$ and $0 < 2\lambda \leq R$ such that
\begin{equation}\label{mainthm:Phiassn}
\Phi_{R^2}(u(T_1 - R^2)) \leq E_0 + \eps, \qquad \Phi_{\lambda^2}(u(T_1 - \lambda^2)) \geq E_0 - \eps,
\end{equation}
and,
for all $T_1 - R^2 \leq t < T_1 - \lambda^2,$ $u(t)$ obeys a \L ojasiewicz inequality of the form
\begin{equation}\label{mainthm:lojasiewiczassumption}
| \Phi_{T_1 - t}(u(t)) - E_0| \leq \left( K \sqrt{T_1 - t} \| \That_{T_1 - t}(u(t)) \|_{T_1 - t} \right)^\alpha
\end{equation}
for some $K \geq 1$ and $1 < \alpha \leq 2.$ 
Then there exists a constant $C_{\ref{thm:mainthm}a},$ depending on $N, K,$ and $\alpha,$ as well as $C_{\ref{thm:mainthm}b},$ depending on $N$ and $K,$ as follows.

\vspace{2mm}

\noindent (a) If $\alpha < 2,$ we have
\begin{equation*}
\Osc_{ \frac{\lambda R}{r} \leq |x| \leq r} u(x,t) \leq C_{\ref{thm:mainthm}a} \left( \left( \eps^{\frac{\alpha - 2}{\alpha} } + \log \frac{R}{r} \right)^{\frac{\alpha - 1}{\alpha - 2} } + \eta \frac{ r}{R} \right) 
\end{equation*}
for $\sqrt{ R \lambda } \leq r \leq \min \left\{ 1, (C_{\ref{thm:mainthm}a} \eta)^{-1} \right\}R $ and $T_1 - \frac14 r^2 \leq t < T_1.$

\vspace{2mm}

\noindent (b) If $\alpha = 2,$ we have
\begin{equation*}
\Osc_{ \frac{\lambda R}{r} \leq |x| \leq r}  u(x,t) \leq C_{\ref{thm:mainthm}b} \left( \eps \left( \frac{r}{R} \right)^{\frac{1}{K}} + \eta \frac{ r}{R} \right) 
\end{equation*}
for $\sqrt{ R \lambda } \leq r \leq \min \left\{ 1, (C_{\ref{thm:mainthm}b} \eta)^{-1} \right\}R $ and $T_1 - \frac14 r^2 \leq t < T_1.$
\end{thm}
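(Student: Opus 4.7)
The plan is to combine the monotonicity identity (\ref{Phimonotonicity}) with the \L ojasiewicz hypothesis (\ref{mainthm:lojasiewiczassumption}) to derive pointwise decay of the energy excess $g(t):=\Phi_{T_1-t}(u(t))-E_0$, convert this into a tail bound on the integral of $\psi$, insert the latter into the pointwise estimate of Lemma \ref{lemma:rduestimate}, and finally integrate along radial rays to obtain the oscillation bound.

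Setting $\tau(t)=T_1-t$ in (\ref{Phimonotonicity}) kills the RHS, so $-dg/dt=\|\That_{T_1-t}(u(t))\|^2_{T_1-t}$ and the hypotheses (\ref{mainthm:Phiassn}) force $|g|\leq\eps$ on $[T_1-R^2,T_1-\lambda^2]$. In the variable $s=-\log(T_1-t)$ (so that $s-s_0=2\sigma$ when $\sigma=\log(R/\sqrt{T_1-t})$ and $s_0=-\log R^2$), (\ref{mainthm:lojasiewiczassumption}) reads $-dg/ds\geq K^{-2}|g|^{2/\alpha}$. Integrating this differential inequality, with separate treatment on the intervals where $g$ has constant sign, yields the pointwise decay
\begin{equation*}
g(\sigma)\leq\begin{cases}\left(\eps^{-\gamma}+c_{\alpha,K}\sigma\right)^{-1/\gamma},&1<\alpha<2,\\ \eps\,e^{-2\sigma/K^2},&\alpha=2,\end{cases}
\end{equation*}
with $\gamma=(2-\alpha)/\alpha$.

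The weighted Poincar\'e inequality of Lemma \ref{lemma:Poincare} gives $\psi^2(\sigma)\leq 8(-dg/ds)$. Using the change of variables $ds=-dg/|g'|$ together with the \L ojasiewicz lower bound on $|g'|$ furnishes the tail bound $\int_\sigma^{\log(R/\lambda)}\psi(\sigma')\,d\sigma'\leq C_{K,\alpha}\,g(\sigma)^{(\alpha-1)/\alpha}$, which at $\sigma_r:=\log(R/r)$ becomes the quantity $(\eps^{-\gamma}+\sigma_r)^{-\mu}$ with $\mu=(\alpha-1)/(2-\alpha)$ appearing in part (a), or an exponentially decaying quantity in part (b). The oscillation is then bounded by $C\int_{\sigma_r}^{\log(r/\lambda)}\sup_{|x|=\rho}(\rho|du|)\,d\sigma_\rho$, into which Lemma \ref{lemma:rduestimate} is inserted pointwise in $\rho$. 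Swapping the order of integration, the $\sigma_\rho$-integration collapses the kernel $e^{\kappa(\sigma-\sigma_\rho)}$ and produces three contributions: (i) the tail integral of $\psi$ on $[\sigma_r,\log(r/\lambda)]$ from the previous step, (ii) a weighted integral of $\psi$ on $[0,\sigma_r]$ controlled by Cauchy--Schwarz against $\int\psi^2\,d\sigma\leq 8\eps$, and (iii) the $\eta$-term, integrating to $C\eta(r/R)^\kappa\leq C\eta\,r/R$ for $\kappa\geq 1$.

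The principal obstacle is the intricate bookkeeping needed to reproduce the precise exponents $(\alpha-1)/(\alpha-2)$ and $1/K$ stated in (a) and (b); matching part (b) in particular requires carefully absorbing $K$-dependent factors into $C_{\ref{thm:mainthm}b}$. The possibility of sign changes of $g$ during the flow adds a further layer of case analysis for the \L ojasiewicz ODE, and one must verify that the Cauchy--Schwarz contribution on $[0,\sigma_r]$ is always dominated by the main \L ojasiewicz-driven term.
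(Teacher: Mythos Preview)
Your overall architecture matches the paper's proof: derive an ODE for $g(s)=\Phi_{T_1-t}(u(t))-E_0$ in the logarithmic variable, get decay of $g$, convert to an $L^1$ tail bound on $\psi$ via the \L ojasiewicz trick $\int\delta\leq K\int\delta^2|\phi|^{-1/\alpha}=-C\frac{d}{ds}\phi^{(\alpha-1)/\alpha}$, feed into Lemma~\ref{lemma:rduestimate}, and integrate radially. Two points, however, are real gaps rather than bookkeeping.

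First, you only run Gronwall forward to obtain $g(\sigma)\leq\varphi(\sigma)$. The paper also runs it \emph{backward} from $s=S:=\log(R/\lambda)$, using the second hypothesis in (\ref{mainthm:Phiassn}), to get the two-sided bound $-\varphi(S-s)\leq g(s)\leq\varphi(s)$. Without the lower bound, your tail estimate $\int_{\sigma}^{S}\psi\leq C\,g(\sigma)^{(\alpha-1)/\alpha}$ fails once $g$ changes sign: the endpoint term $-g(S)^{(\alpha-1)/\alpha}$ contributes $+\eps^{(\alpha-1)/\alpha}$, a constant independent of $\sigma$, which destroys the decay in $\sigma_r$ needed for the oscillation bound. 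The paper avoids this by restricting to the \emph{symmetric} interval $[s,S-s]$, where both endpoints are controlled by $\varphi(s)$.

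Second, your treatment of contribution (ii) via Cauchy--Schwarz does not work. The bound $\int_0^{\sigma_r}\psi(\sigma)e^{\kappa(\sigma-\sigma_r)}\,d\sigma\leq(\int\psi^2)^{1/2}(\int e^{2\kappa(\sigma-\sigma_r)})^{1/2}\leq C\sqrt{\eps}$ is a constant in $\sigma_r$; since $(\alpha-1)/\alpha<\tfrac12$ for $\alpha<2$, this is \emph{not} dominated by the main term $\varphi(\sigma_r)^{(\alpha-1)/\alpha}\sim\sigma_r^{-(\alpha-1)/(2-\alpha)}$ as $\sigma_r\to\infty$. The paper instead performs the change of variables $\nu=\varsigma-\sigma$, writes the double integral as $\int_0^{S-s}e^{-\nu}\int_{\max\{s-\nu,0\}}^{S-s-\nu}\psi\,d\sigma\,d\nu$, and applies the symmetric tail bound uniformly in $\nu$ to get $\leq C\int_0^{S-s}e^{-\nu}\varphi(\max\{s-\nu,0\})^{(\alpha-1)/\alpha}\,d\nu\leq C\varphi(s)^{(\alpha-1)/\alpha}$. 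This handles what you call (i) and (ii) in one stroke and preserves the decay.
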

\begin{proof}
Let
$$\phi(t) := \Phi_{T_1 - t}(u(t)) - E_0.$$
By (\ref{Phimonotonicity}) and the \L ojasiewicz inequality (\ref{mainthm:lojasiewiczassumption}), we have
\begin{equation*}
\frac{d \phi(t)}{dt} + \frac{|\phi(t)|^{\frac{2}{\alpha} } }{K^2 (T_1 - t) } \leq 0.
\end{equation*}
Letting $s = -\frac12 \log \frac{T_1 - t}{R^2}$ as in the last proof, we get
\begin{equation}
\frac{d \phi(s)}{ds} + \frac{2 }{K^{2}} |\phi(s)|^{\frac{2}{\alpha} } \leq 0.
\end{equation}
Let
$$\varphi(s) : = \left( \eps^{-\frac{2 - \alpha}{\alpha} } + \frac{2(2 - \alpha)}{K^{2} \alpha} s \right)^{-\frac{\alpha}{2 - \alpha} },$$
so that $\varphi(0) = \eps$ and
$$\frac{d\varphi(s)}{ds} = - \frac{2}{K^{2}} \varphi(s)^{\frac{2}{\alpha}}$$
for $s \geq 0.$ By Gronwall's inequality, we have
\begin{equation}\label{phiboundfromabove}
\phi(s) \leq \varphi (s).
\end{equation}
Meanwhile, we also have
$$\frac{d\left( - \varphi \left( \log \frac{R}{\lambda} - s \right) \right)}{ds} = - \frac{2}{K^{2}} \varphi \left( \log \frac{R}{\lambda} - s \right)^{\frac{2}{\alpha}}$$
for $s \leq \log \frac{R}{\lambda}.$
Applying Gronwall's inequality backwards from $s = \log \frac{R}{\lambda}$ to $s = 0,$ we obtain
\begin{equation}\label{phivarphibound}
-\varphi \left( \log \frac{R}{\lambda} - s \right) \leq \phi(s) \leq \varphi (s).
\end{equation}
Next, we translate this into an integral bound on $\Psi.$ Let
$$\delta(s) := \sqrt{T_1 - t} \| \That_{T_1 - t} \|_{T_1 - t},$$
where $T_1 - t = R^2 e^{-2s}.$
By (\ref{Phimonotonicity}), we have
$$\frac{d}{ds} \phi(s)^{\frac{\alpha - 1}{\alpha}} = - \frac{\alpha - 1}{\alpha} |\phi(s)|^{-\frac{1}{\alpha}} \delta^2(s),$$
where we define $\phi(s)^{\frac{\alpha - 1}{\alpha}} = -|\phi(s)|^{\frac{\alpha - 1}{\alpha}}$ if $\phi(s)$ is negative.
Let $S = \log R / \lambda.$ For $s \leq S/2,$ applying the \L ojasiewicz inequality (\ref{mainthm:lojasiewiczassumption}), we may write
\begin{equation*}
\begin{split}
\int_{s}^{S - s} \delta(\sigma) \, d\sigma = \int_{s}^{S - s} \delta^2(\sigma) \delta(\sigma)^{-1} \, d\sigma & \leq \int_{s}^{S - s} \delta(\sigma)^2 |\phi(\sigma)|^{-\frac{1}{\alpha}} \, d\sigma \\
& \leq - \frac{\alpha}{\alpha - 1} \int_{s}^{S - s} \frac{d}{d\sigma} \phi(\sigma)^{\frac{\alpha - 1}{\alpha}} \, d\sigma \\
& \leq \frac{\alpha}{\alpha -1} \left( \phi(s)^{\frac{\alpha -1}{\alpha}} - \phi(S - s)^{\frac{\alpha -1}{\alpha}} \right) \\
& \leq \frac{2\alpha}{\alpha -1} \varphi(s)^{\frac{\alpha -1}{\alpha}},
\end{split}
\end{equation*}
where we have used (\ref{phivarphibound}).

Define $\psi(s)$ by (\ref{psidefn}). Applying Lemma \ref{lemma:Poincare}, we obtain
\begin{equation}\label{psiintegralbound}
\begin{split}
\int_{s}^{S - s} \psi(\sigma) \, d\sigma \leq 4 \int_{s}^{S - s} \delta(\sigma) \, d\sigma \leq \frac{8\alpha}{\alpha -1} \varphi(s)^{\frac{\alpha -1}{\alpha}}.
\end{split}
\end{equation}
We now use (\ref{psiintegralbound}) to estimate the oscillation of $u.$ Let $s = \log R / r.$ We have
$$\Osc_{ \frac{\lambda R}{r} \leq |x| \leq r}  u(x,t) \leq \int^r_{\lambda R / r} |du| \, d\rho = \int_s^{S - s} r |du| \, d\varsigma.$$
Applying Lemma \ref{lemma:rduestimate} with $\kappa = 1,$ for $T_1 - \frac14 r^2 \leq t < T_1,$ we get
\begin{equation}\label{oscUestimate}
\begin{split}
\Osc_{ \frac{\lambda R}{r} \leq |x| \leq r}  u(x,t) \leq \int_s^{S - s} r |du| \, d\varsigma & \leq C \int_s^{S - s} \left( \int_0^{\varsigma} \psi\left( \sigma \right) e^{ \sigma - \varsigma } \, d\sigma  + \eta e^{-\varsigma} \right) \, d\varsigma \\
& \leq C \left( \int_s^{S - s} \!\!\! \int_0^{\varsigma} \psi\left( \sigma \right) e^{ \sigma - \varsigma } \, d\sigma \, d\varsigma  + \eta e^{-s} \right).
\end{split}
\end{equation}
Changing variables $\nu = \varsigma - \sigma,$ we can estimate the first integral by
\begin{equation*}
\begin{split}
\int_s^{S - s} \!\!\! \int_0^{\varsigma} \psi\left( \sigma \right) e^{\sigma - \varsigma} \, d\sigma \, d\varsigma & = \int_{0}^{S - s} e^{-\nu} \int_{\max \{s - \nu, 0\} }^{S - s - \nu} \psi(\sigma ) \, d\sigma d\nu \\
& \leq C \int_{0}^{S - s} e^{- \nu} \varphi \left( \max \{s - \nu, 0 \} \right)^{\frac{\alpha - 1}{\alpha}} d\nu \\
& \leq C \varphi(s )^{\frac{\alpha - 1}{\alpha}},
\end{split}
\end{equation*}
where we have used (\ref{psiintegralbound}). Returning to (\ref{oscUestimate}), we obtain the bound of (a). The bound (b) is proved similarly.
\end{proof}

\begin{cor}\label{cor:oscillation}
Take $T_1 = T$ in the previous theorem and suppose that the assumptions (\ref{mainthm:Phiassn}-\ref{mainthm:lojasiewiczassumption}) hold for all $\lambda \in \left( 0, R \RB.$ Then the limit $u(x,T) : = \lim_{t \nearrow T} u(x,t)$ exists for all $x \in B_{R/2} \setminus \{0\},$ with
\begin{equation*}
\Osc_{B_r} u(T) = O \left| \log r \right|^{\frac{\alpha - 1}{\alpha - 2}}
\end{equation*}
as $r \searrow 0$ if $1 < \alpha < 2,$ and is H\"older continuous if $\alpha =2.$
\end{cor}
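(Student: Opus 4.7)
The plan is to apply Theorem \ref{thm:mainthm} at each spatial scale $r \in (0, R/2]$ and pass to the limits $\lambda \searrow 0$ and $t \nearrow T$. The oscillation bounds in Theorem \ref{thm:mainthm}(a) and (b) are already independent of $\lambda$, so the first limit is immediate; the second requires pointwise convergence of $u(\cdot, t)$ at each $x \neq 0$.

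To establish that $u(x, T) := \lim_{t \nearrow T} u(x, t)$ exists for $x \in B_{R/2} \setminus \{0\}$, I combine the $L^1$-bound on $\psi$ from (\ref{psiintegralbound}) with Lemma \ref{lemma:rduestimate} (applied with any $\kappa > 0$) to deduce $|du(x, t)| \leq C/|x|$ uniformly for $(x, t)$ with $|x| \geq r_0 > 0$ and $T - r_0^2/4 \leq t < T$. Standard parabolic $\epsilon$-regularity for harmonic map flow (e.g., \cite[Theorem 3.4]{songwaldron}) then upgrades this to uniform $C^k$ bounds on $\{|x| \geq r_0\} \times [T - r_0^2/8, T)$, so $\partial_t u = \T(u)$ is pointwise bounded and $u(x, \cdot)$ extends smoothly to $t = T$ at every $x \neq 0$.

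For fixed $r < R/2$ and any pair $x, y$ with $0 < |x|, |y| \leq r$, I choose $\lambda$ small enough that both points lie in $\{\lambda R/r \leq |\cdot| \leq r\}$ and apply Theorem \ref{thm:mainthm} at times $t \in [T - r^2/4, T)$. Passing $t \nearrow T$ using the pointwise convergence above yields
\begin{equation*}
|u(x, T) - u(y, T)| \leq C \left(\left(\eps^{(\alpha-2)/\alpha} + \log R/r\right)^{(\alpha - 1)/(\alpha-2)} + \eta\, r/R\right)
\end{equation*}
in case (a), and the analogous estimate in case (b). For $\alpha < 2$ the exponent $(\alpha - 1)/(\alpha - 2)$ is negative, so as $r \searrow 0$ the bound is dominated by $|\log r|^{(\alpha-1)/(\alpha-2)}$; for $\alpha = 2$ the bound is $O((r/R)^{1/K})$. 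In either case $\Osc_{B_r \setminus \{0\}} u(T) \to 0$, so $u(T)$ is Cauchy as $x \to 0$ and extends continuously to $B_{R/2}$ by defining $u(0, T)$ as this limit; the stated bound on $\Osc_{B_r} u(T)$ then follows.

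The main obstacle is justifying the interchange of the limits $t \nearrow T$ and $\lambda \searrow 0$ in the oscillation estimate; this reduces to the pointwise smoothness established in the second paragraph, which depends only on the uniform $L^1$-control of $\psi$ already obtained in the proof of Theorem \ref{thm:mainthm}.
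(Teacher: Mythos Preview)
Your proposal is correct and follows essentially the same route as the paper, namely letting $\lambda \searrow 0$ in Theorem~\ref{thm:mainthm}; the paper's own proof is in fact just that single sentence. You supply additional justification for the existence of the pointwise limit $u(x,T)$ at $x \neq 0$ (via the gradient bound and $\eps$-regularity), which the paper leaves implicit as a standard consequence of finite-time regularity for harmonic map flow away from energy concentration points.
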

\begin{proof} This follows by letting $\lambda \searrow 0$ in the previous theorem.
\end{proof}

\begin{cor} 
Let $u: \R^2 \times \LB 0, T \right) \to S^2$ be a classical solution of harmonic map flow. 

\vspace{2mm}

\noindent (a) The body map $u(T)$ extends to a continuous map $\R^2 \to S^2.$

\vspace{2mm}

\noindent (b) Given a point $p \in \R^2$ and any sequence of times $t_i \nearrow T,$ the maps $u(t_i)$ sub-converge in the bubble-tree sense without necks. Specifically, there exist finitely many nonconstant harmonic maps $\phi_k : \R^2 \to S^2,$ obtained as rescaled limits of $u(t_i),$ such that the energy identity holds:
$$\lim_{r \searrow 0} \lim_{t \nearrow T} E(u(t), B_r(p) ) = \sum_k E(\phi_k).$$
Moreover, we have $\lim_{x \to \infty} \phi_k(x) = u(p,T)$ for at least one $k.$

\end{cor}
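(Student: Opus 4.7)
The plan is to derive both parts from Corollary~\ref{cor:oscillation} and Theorem~\ref{thm:mainthm}, using Theorem~\ref{thm:firstloj} as the Łojasiewicz input. First I would fix $p \in \R^2$, translate so $p$ becomes the origin, and set $k = \lceil E(u_0)/(4\pi)\rceil$. Since $E(u(t)) \leq 4\pi k$ along the flow, applying Theorem~\ref{thm:firstloj} at each $t$ to the rescaled map $y \mapsto u(\sqrt{T-t}\,y, t)$ and unrolling the scaling (which sends $\Phi_1$ to $\Phi_{T-t}$ and $\|\That_1\|_1$ to $\sqrt{T-t}\,\|\That_{T-t}\|_{T-t}$) yields
\[
|\Phi_{T-t}(u(t)) - 4\pi n(t)| \leq C_{k,\beta}\bigl(\sqrt{T-t}\,\|\That_{T-t}(u(t))\|_{T-t}\bigr)^{2-\beta}
\]
for any fixed $\beta \in (0,1)$ and some $n(t) \in \{0,\dots,k\}$. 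The monotonicity identity~(\ref{Phimonotonicity}) at $\tau(t) = T-t$ shows $\Phi_{T-t}(u(t))$ is non-increasing with limit $E_0$ and $\int_0^T \|\That_{T-t}\|_{T-t}^2\,dt < \infty$; picking times $t_i \nearrow T$ with $\|\That_{T-t_i}\|_{T-t_i} \to 0$ forces $E_0 = 4\pi n_0$ for some $n_0 \leq k$. A dichotomy — small $\|\That\|$ freezes $n(t) = n_0$ by discreteness of $4\pi\N$ (once $t$ is close enough to $T$ that $|\Phi - E_0| < 2\pi$), and large $\|\That\|$ makes the right-hand side exceed the trivial bound $E(u_0)$ — upgrades the inequality to the form~(\ref{mainthm:lojasiewiczassumption}) with fixed target $E_0$, exponent $\alpha = 2-\beta$, and some $K = K(E(u_0), \beta)$. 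The lower half of~(\ref{mainthm:Phiassn}) is immediate from monotonicity, the upper half holds for $R$ small, and $\Psi_{4R^2}(u(T-R^2)) \leq CR^2 E(u_0)$ is a direct integral estimate.

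For part (a), Corollary~\ref{cor:oscillation} then produces the pointwise limit $u(p,T)$ with
\[
\Osc_{B_r(p)} u(T) = O\bigl((\log r^{-1})^{(\alpha-1)/(\alpha-2)}\bigr),
\]
whose right-hand side tends to $0$ as $r \searrow 0$ since $1 < \alpha < 2$. The constants depend only on $E(u_0)$ and $\beta$, hence are uniform in $p$; combined with standard small-energy parabolic regularity at non-singular points (where $E_0(p) = 0$), this gives the continuous extension of $u(T)$ to $\R^2$.

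For part (b), I would invoke the classical bubble-tree extraction of Struwe and Qing--Tian along a subsequence $t_i \nearrow T$ to produce finitely many nonconstant harmonic bubbles $\phi_k : \R^2 \to S^2$ at $p$ at scales $\lambda_i^{(k)} \to 0$. For consecutive bubbles, apply Theorem~\ref{thm:mainthm}(a) to the neck annulus $A_i = \{c_1\lambda_i^{(k+1)} \leq |x - p| \leq c_2\lambda_i^{(k)}\}$, choosing $R$ a small fixed radius, outer radius $r = c_2\lambda_i^{(k)}$, and inner radius determined by $\lambda R/r = c_1\lambda_i^{(k+1)}$; since $r/R \to 0$ and $(\alpha-1)/(\alpha-2) < 0$, this forces $\Osc_{A_i} u(t_i) \to 0$. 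Combined with the pointwise energy-density bound of Lemma~\ref{lemma:rduestimate} (noting that $\psi \in L^1$ makes the convolution $\int_0^\varsigma \psi(\sigma)e^{\sigma-\varsigma}d\sigma$ vanish as $\varsigma \to \infty$, so $r|du(t_i)| \to 0$ on $A_i$) and the classical neck-analysis argument for approximately harmonic maps, this yields $E(u(t_i), A_i) \to 0$ and hence the energy identity without energy loss in necks. Applying the same oscillation bound on an outer annulus of inner radius $\sim \lambda_i^{(1)}$ and outer radius a small fixed $r_0$ shows $u(t_i)$ on that annulus converges to $u(p, T)$, which identifies $\phi_1(\infty) = u(p, T)$ for the outermost bubble.

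The hard part is the passage from vanishing oscillation to vanishing energy on the neck annuli in (b): oscillation decay follows directly from Theorem~\ref{thm:mainthm}(a), but the energy conclusion requires combining the $r|du|$ estimate of Lemma~\ref{lemma:rduestimate} with a careful neck-energy argument on annuli whose conformal modulus grows in a controlled log-polynomial way. Every other step reduces to a direct application of Theorems~\ref{thm:firstloj}, \ref{thm:mainthm} and Corollary~\ref{cor:oscillation} once the hypotheses have been set up.
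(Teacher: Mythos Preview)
Your argument for part (a) is correct and matches the paper's approach; in fact you supply more detail than the paper does. The paper simply asserts that ``the \L ojasiewicz inequality of Theorem~\ref{thm:firstloj} gives (\ref{mainthm:lojasiewiczassumption}) after rescaling by $\tau = T-t$, where we must have $E_0 = 4\pi n$,'' and then invokes Corollary~\ref{cor:oscillation}. Your subsequence argument (finding $t_i$ with $(T-t_i)\|\That\|_{T-t_i}^2 \to 0$ from the integrability in (\ref{Phimonotonicity})) to pin down $E_0 = 4\pi n_0$, and your dichotomy to freeze $n(t) \equiv n_0$ once $|\Phi - E_0| < 2\pi$, are exactly what is needed to justify that one-line claim. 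Your verification of the $\Psi$-bound and of (\ref{mainthm:Phiassn}) is also correct.

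For part (b), the paper gives no proof at all: it declares the argument ``standard in light of Corollary~\ref{cor:oscillation}'' and cites \cite[Theorem 6.8]{songwaldron}. Your sketch is therefore going beyond the paper, and its spirit---use the oscillation estimate of Theorem~\ref{thm:mainthm}(a) on each neck annulus to kill both oscillation and energy---is the right one. One point to watch if you flesh this out: when you apply Theorem~\ref{thm:mainthm}(a) with $T_1 = T$ and outer radius $r = c_2\lambda_i^{(k)}$, the conclusion is only valid for $t \in [T - r^2/4, T)$, which forces $T - t_i \lesssim (\lambda_i^{(k)})^2$. For the outermost neck (where $r$ is a fixed small radius) this is automatic, but for deeper necks between bubbles at scales $\lambda_i^{(k)} \ll \sqrt{T - t_i}$ it need not hold, so the inner necks must be handled after recentering and rescaling (recovering an ``outermost'' configuration at the new scale), or via the elliptic neck analysis for almost-harmonic maps cited in the reference. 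This is presumably why the paper defers to the literature rather than writing out the induction.
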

\begin{proof} ($a$) It suffices to prove continuity of $u(T)$ at $0 \in \R^2.$ By monotonicity of $\Phi,$ the limit
$$E_0 : = \lim_{R \searrow 0} \Phi_{R^2} \left( u(T - R^2) \right) \geq 0$$
exists, and we may take $R > 0$ such that
$$\Phi_{R^2}(u(T - R^2)) \leq E_0 + \eps_0.$$
Now, the \L ojasiewicz inequality of Theorem \ref{thm:firstloj} gives (\ref{mainthm:lojasiewiczassumption}) after rescaling by $\tau = T - t,$ where we must have $E_0 = 4 \pi n.$ 
The result follows from the previous Corollary.

\vspace{2mm}

\noindent ($b$) The proof is standard in light of Corollary \ref{cor:oscillation} (see \cite[Theorem 6.8]{songwaldron} for a recent account), and we omit it.
\end{proof}

\bibliographystyle{amsinitial}
\bibliography{biblio}

\end{document}